\newtheorem{theorem}{Theorem}
\newcommand{\wabs}[1]{\left|#1\right|}
\newcommand{\wc}{\mathds{C}}
\newcommand{\wcal}[1]{\mathcal{#1}}
\newcommand{\wfc}[2]{{#1}\!\left(#2\right)}
\newcommand{\wi}[1]{\wrm{i}}
\newcommand{\wlr}[1]{\left( #1 \right)}
\newcommand{\wn}{\mathds N}
\newcommand{\wnorm}[1]{\left\| {#1} \right\|}
\newcommand{\wpade}[1]{Pad\'{e}}
\newcommand{\wref}[1]{(\ref{#1})}
\newcommand{\wrn}[1]{{\mathds R}^{#1}}
\newcommand{\wrm}[1]{\mathrm{#1}}
\newcommand{\wseq}[2]{{\left\{ {#1}_{#2}, \ {#2} \in \wn \right\}}}
\newcommand{\wset}[1]{{\left\{ #1 \right\}}}
\newcommand{\wvec}[1]{\mathbf{#1}}
\begin{document}
\title{Robust Pad\'{e} approximants may have spurious poles}

\author{Walter F. Mascarenhas\thanks{
Instituto de Matem\'{a}tica e Estat\'{i}stica, Universidade de S\~{a}o Paulo,
           Cidade Universit\'{a}ria, Rua do Mat\~{a}o 1010, S\~{a}o Paulo SP, Brazil. CEP 05508-090.
              Tel.: +55-11-3091 5411, Fax: +55-11-3091 6134,  walter.mascarenhas@gmail.com, 
              Supported by grant 2013/10916-2 from Funda\c{c}\~{a}o de Amparo \`{a} Pesquisa do Estado de S\~{a}o Paulo (FAPESP)}
}
\maketitle

\begin{abstract}
We show that robust \wpade{} approximants obtained with the SVD may have spurious poles and
may not converge pointwise.
\end{abstract}


\section{Introduction}

In the recent article \cite{GONNET_REVIEW}, Gonnet et al. explain how to use the SVD to compute
Padé approximants in floating point arithmetic or for problems with noise,
and a similar strategy was proposed in \cite{OLGA}.
According to \cite{GONNET_REVIEW}, ``we can reduce the effects of noise, whether intrinsic
to the data or introduced by rounding errors,'' by decreasing the degree of the approximants
when we detect that some singular values
of a certain matrix are below a threshold. Experiments suggest that these techniques lead
to fewer Froissart doublets and better approximants.

In view of the success of their method in practice, Gonnet et al. asked
whether, from the theoretical point of view, their technique leads
to rational approximants for which we can prove pointwise convergence.
If this were the case then these approximants
would have better theoretical properties than \wpade{}'s, because
we only have proofs of convergence in capacity for \wpade{} approximants
and there are examples in which they diverge for all $z \neq 0$ (see \cite{WALLIN}.)

In this short note we present an adaptation of the classic example by Gammel  \cite{BAKER}
which shows that the answer to the open question of Gonnet et al. is negative.
In fact, in the next section we show that a sequence of approximants
generated by the techniques proposed in \cite{GONNET_REVIEW}
may not converge pointwise to the function being approximated,
and may have spurious poles even when the matrices involved
in their computation have condition number smaller than 5.

\section{Divergence and spurious poles with well conditioned matrices $\wvec{B}_n$}

We consider only \wpade{} approximants of the form $r_{nn} = p_{n}/q_{n}$ and write
\[
\wfc{p_{n}}{z} = \sum_{j = 0}^n a_j z^j
\hspace{1cm} \wrm{and} \hspace{1cm}
\wfc{q_{n}}{z} = \sum_{j = 0}^n b_j z^j,
\]
with $\wvec{a} := \wlr{a_0,\dots,a_n}^t$ and
$\wvec{b} := \wlr{b_0,\dots,b_n}^t$.
To compute the  approximant for  $f = \sum_{k =0}^\infty c_k z^k$ we solve the system
$\wvec{B_n} \wvec{b} = 0$ and set $\wvec{a} = \wvec{A_n} \wvec{b}$, for
\[
\wvec{A_n} :=
\left[
\begin{array}{cccc}
c_{0}  & 0        & \dots & 0 \\
c_{1}  & c_{0}    & \dots & 0 \\
\vdots &  \vdots  & \ddots & \vdots \\
c_{n}  & c_{n- 1} & \dots & c_0 \\
\end{array}
\right],
\]
and
\begin{equation}
\label{def_ab}
\wvec{B}_n :=
\left[
\begin{array}{ccccccc}
c_{n+1}  & c_{n}      & c_{n-1} & \dots   & \dots   & c_1 \\
c_{n+2}  & c_{n+1}    & c_n     & c_{n-1} & \dots   & c_2 \\
\vdots   & \vdots     &         & \ddots  & \ddots  & \vdots \\
c_{2n-1} &            &         &         & c_n     & c_{n-1} \\
c_{2n}   & c_{2 n- 1} &         &         &         & c_{n}
\end{array}
\right].
\end{equation}

Gonnet et al. \cite{GONNET_REVIEW} are concerned
with the case in which $\wvec{B}_n$ is ill-conditioned. In this case, they propose
techniques to reduce $n$ and obtain better approximants. Unfortunately, the ill-conditioning
of $\wvec{B}_n$ is not the only cause of divergence of \wpade{} approximants, and
\cite{GONNET_REVIEW} solves only part of the problem. Of course,
this part is important in practice. However, we now
present examples showing that more is needed in order to eliminate spurious poles
and obtain rational approximants which converge pointwise to $f$.

Our examples are related to  the classic one by Gammel \cite{BAKER}, in which $f$ has the form
\begin{equation}
\label{gammels}
\wfc{f}{z} = 1 + \sum_{k = 1}^\infty \alpha_k \wlr{ \sum_{n = n_k}^{2 n_k} \wlr{z/z_k}^n}
= 1 + \sum_{k = 0}^\infty \alpha_k \frac{\wlr{z/z_k}^{n_k} - \wlr{z/z_k}^{2 n_k + 1}}{1 - z/z_k},
\end{equation}
where $n_k = 2^k - 1$ and the $\alpha_k$ are chosen in order to enforce the convergence of the
series above. Gammel's example does not yield a negative answer to the open question by Gonnet et al.,
because the matrices $\wvec{B}_n$ they lead to may be ill-conditioned. However, if instead
of asking for an entire function we content ourselves with  $f$ for which the series $\wfc{f}{z} = \sum_{j = 0}^\infty c_j z^j$
converges for $\wabs{z} < 1$, then the function $f$ with the slightly different form
\begin{equation}
\label{mine}
\wfc{f}{z} := 1 + \sum_{k = 2}^\infty 16^k \wlr{z^{n_k - 1} + z_k^{2 n_k} \frac{\wlr{z/z_k}^{n_k} - \wlr{z/z_k}^{2 n_k + 1}}{1 - z/z_k}},
 \ n_k := 2^k - 2,  \ \
\end{equation}
for any sequence $\wseq{z}{k}$ with $0 < \wabs{z_k} < 1/3$,
leads to an example in which the matrix $\wvec{B}_n$ is well-conditioned, the denominator $\wfc{q_{n_k}}{z}$
is equal to $1 - z/ z_k$ and $\wfc{p_{n_k}}{z_k} \neq 0$.
The $(n_k,n_k)$ \wpade{} approximant of the function $f$ in \wref{mine} has a spurious pole at $z_k$,
and the sequence of approximants do not converge uniformly in any set $A$ such that the interior of
$A_{1/3} := A \cap \wset{\wabs{z} < 1/3}$ is non empty and
$\wseq{z}{k}$ is dense in $A_{1/3}$. Moreover,
when each element in the sequence $\wseq{z}{k}$ is repeated infinitely many times,  we
do not have pointwise convergence at the $z_k$, because the $(n_k, n_k)$ \wpade{} approximant assumes
the value $\infty$ at $z_k$. For example, if the
points $z_k$ are
\[
\frac{1}{4}, \ \frac{1}{4}, \frac{1}{5}, \ \frac{1}{4}, \frac{1}{5},\frac{1}{6}, \
\dots\ , \
\ \frac{1}{4}, \frac{1}{5},\frac{1}{6}, \frac{1}{7}, \ \dots \ , \ \frac{1}{n},
\frac{1}{4}, \frac{1}{5},\frac{1}{6}, \frac{1}{7}, \ \dots\ , \ \frac{1}{n}, \frac{1}{n + 1}, \ \dots
\]
then we do not have pointwise convergence at anyone of them.

We now formalise the arguments above, and after that we present
our acknowledgements and comments regarding a related article.

\begin{theorem} \label{thm_main}
If the points $z_k \in \wc{}$ are such that $0 < \wabs{z_k} < 1/3$, then the
coefficients $c_j$ in the expansion $\wfc{f}{z} = \sum_{j=0}^\infty c_j z^j$ of the
function $f$ in \wref{mine} are such that
$0 < \wabs{c_j} \leq \wlr{j + 3}^4$  for $j = 2, 3, 4, \dots$ and
\begin{itemize}
\item[(i)] The function $f$ has a $(n_k,n_k)$ \wpade{} approximant
with $\wfc{q_{n_k}}{z} = 1  - z / z_k$ and $\wfc{p_{n_k}}{z_k} \neq 0$.
\item[(ii)] The singular values $\wfc{\sigma_1}{\wvec{B}_n}$ and $\wfc{\sigma_n}{\wvec{B}_n}$ of the
matrix $\wvec{B}_n$ in \wref{def_ab} corresponding to $n = n_k$ satisfy
$ \wfc{\sigma_1}{\wvec{B}_n} <  5 \wfc{\sigma_n}{\wvec{B}_n}$.\\
\end{itemize}
\end{theorem}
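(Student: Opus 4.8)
My plan is to read the Taylor coefficients off the block structure of \wref{mine} (which also yields the stated bound), to prove (i) by an explicit cancellation of the pole of the $k$-th block, and to prove (ii) by writing $16^{-k}\wvec{B}_{n_k}$ as a co-isometry plus a Toeplitz perturbation of norm strictly below $2/3$. Starting with the coefficients: since $\frac{a^{n}-a^{2n+1}}{1-a}=\sum_{m=n}^{2n}a^{m}$, the $k$-th summand of \wref{mine} is the polynomial $g_k(z):=16^{k}\bigl(z^{\,n_k-1}+\sum_{m=n_k}^{2n_k}z_k^{\,2n_k-m}z^{\,m}\bigr)$, whose powers of $z$ fill the interval $[\,2^{k}-3,\,2^{k+1}-4\,]$. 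Consecutive intervals are disjoint and abut ($g_{k+1}$ starts at power $2n_k+1$), so $f=1+\sum_{k\ge 2}g_k$ is a genuine power series in which every $j\ge 1$ lies in exactly one block $k$, and $c_j=16^{k}$ if $j=n_k-1$ whereas $c_j=16^{k}z_k^{\,2n_k-j}$ if $n_k\le j\le 2n_k$. Hence $c_j\neq 0$; $\wabs{c_j}=16^{k}=(2^{k})^{4}=(j+3)^{4}$ in the first case; and $\wabs{c_j}\le 16^{k}<(2^{k}+1)^{4}\le(j+3)^{4}$ in the second, because there $j\ge n_k=2^{k}-2$. I also record the sharper estimate $\wabs{c_j}\le 16^{\,k-1}$ whenever $j\le n_k-2$ (such $j$ lies in a block $k'\le k-1$), which is the quantitative input for (ii).

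For (i) I multiply $f$ by $q_{n_k}(z):=1-z/z_k$. Blocks $g_{k'}$ with $k'<k$ have degree $\le 2n_{k-1}=n_k-2$, so $h:=1+\sum_{k'<k}g_{k'}+16^{k}z^{\,n_k-1}$ has degree $n_k-1$; the denominator of $g_k$ cancels, producing $(1-z/z_k)g_k=16^{k}(1-z/z_k)z^{\,n_k-1}+16^{k}z_k^{\,n_k}z^{\,n_k}-16^{k}z_k^{-1}z^{\,2n_k+1}$; and every $g_{k'}$ with $k'>k$ begins at power $n_{k'}-1\ge 2n_k+1$. Therefore, with the polynomial $p_{n_k}:=(1-z/z_k)\,h+16^{k}z_k^{\,n_k}z^{\,n_k}$ of degree $\le n_k$,
\[
f\,q_{n_k}-p_{n_k}=-16^{k}z_k^{-1}z^{\,2n_k+1}+(1-z/z_k)\sum_{k'>k}g_{k'}=O\bigl(z^{\,2n_k+1}\bigr),\qquad\deg p_{n_k},\ \deg q_{n_k}\le n_k .
\]
By uniqueness of the $(n_k,n_k)$ Pad\'e approximant — the linear system of the paper's construction has $(p_{n_k},q_{n_k})$ as its solution — $r_{n_k n_k}=p_{n_k}/q_{n_k}$; and $p_{n_k}(z_k)=16^{k}z_k^{\,2n_k}\neq 0$, so $1-z/z_k$ does not cancel and $z_k$ is a genuine pole of $r_{n_k n_k}$ (spurious, since $f$ is analytic on $\wset{\wabs{z}<1}\ni z_k$).

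For (ii) I pass to $\wvec{\tilde B}:=16^{-k}\wvec{B}_{n_k}$, which has the same ratio $\sigma_1/\sigma_{n_k}$ as $\wvec{B}_{n_k}$, and split $\wvec{\tilde B}=\wvec{U}+\wvec{V}$. Here $\wvec{U}$ keeps only the entries equal to $16^{-k}c_{n_k-1}=16^{-k}c_{2n_k}=1$, i.e. $\wvec{U}_{i,\,i+2}=1$ for $1\le i\le n_k-1$ and $\wvec{U}_{n_k,\,1}=1$, all other entries zero; these $n_k$ ones occupy one per row and lie in pairwise distinct columns, so $\wvec{U}\wvec{U}^{t}=\wvec{I}_{n_k}$ and every singular value of $\wvec{U}$ is $1$. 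The remainder $\wvec{V}$ is a rectangular Toeplitz matrix whose nonzero diagonals fall, with disjoint Fourier frequencies, into (a) the geometric tail of block $k$ — the diagonals carrying $z_k^{\,n_k},\dots,z_k$, of symbol $\gamma(\theta)=e^{i(n_k-1)\theta}\sum_{q=1}^{n_k}(z_ke^{-i\theta})^{q}$ — and (b) the contribution of the blocks $k'<k$ — the diagonals carrying $16^{-k}c_1,\dots,16^{-k}c_{n_k-2}$, of symbol $\rho$ with $\wabs{\rho(\theta)}=16^{-k}\bigl|\sum_{k'=2}^{k-1}g_{k'}(e^{i\theta})\bigr|$. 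Using that a finite Toeplitz section has operator norm at most the sup-norm of its symbol, together with $\wabs{1-z_ke^{-i\theta}}\ge 1-\wabs{z_k}>\tfrac23$, $\wabs{z_k}<\tfrac13$, and $\wabs{g_{k'}(e^{i\theta})}\le 16^{k'}\bigl(1+\sum_{\ell=0}^{n_{k'}}\wabs{z_{k'}}^{\ell}\bigr)<\tfrac52\,16^{k'}$, I get
\[
\wnorm{\wvec{V}}\le\wnorm{\gamma}_{\infty}+\wnorm{\rho}_{\infty}<\tfrac12\bigl(1+3^{-n_k}\bigr)+\frac{16^{k}-256}{6\cdot 16^{k}}=\tfrac23+\tfrac12\,3^{-n_k}-\frac{128}{3\cdot 16^{k}} .
\]
Since $n_k=2^{k}-2$, the doubly exponential growth of $3^{\,2^{k}}$ gives $16^{k}<\tfrac{256}{27}\,3^{\,2^{k}}$, i.e. $\tfrac12\,3^{-n_k}<\tfrac{128}{3\cdot 16^{k}}$, for every $k\ge 2$, so $\wnorm{\wvec{V}}<\tfrac23$. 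By Weyl's inequality $\bigl|\sigma_i(\wvec{U}+\wvec{V})-\sigma_i(\wvec{U})\bigr|\le\wnorm{\wvec{V}}$, hence every $\sigma_i(\wvec{\tilde B})$ lies in $[\,1-\wnorm{\wvec{V}},\ 1+\wnorm{\wvec{V}}\,]$; in particular $\sigma_{n_k}(\wvec{\tilde B})>\tfrac13>0$ and
\[
\frac{\wfc{\sigma_1}{\wvec{B}_{n_k}}}{\wfc{\sigma_{n_k}}{\wvec{B}_{n_k}}}=\frac{\wfc{\sigma_1}{\wvec{\tilde B}}}{\wfc{\sigma_{n_k}}{\wvec{\tilde B}}}\le\frac{1+\wnorm{\wvec{V}}}{1-\wnorm{\wvec{V}}}<\frac{1+2/3}{1-2/3}=5 .
\]

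The only genuinely delicate step is the estimate $\wnorm{\wvec{V}}<\tfrac23$ in (ii). It is essentially sharp: for large $k$ the geometric tail of block $k$ already contributes about $\tfrac12$ and the earlier blocks about $\tfrac16$ to the bound on $\wnorm{\wvec{V}}$, and the inequality survives only because $n_k$ grows super-exponentially while $\sum_{k'<k}16^{k'}<16^{k}/15$. This tightness is exactly why the theorem states a condition-number bound of $5$; everything else — the coefficient estimate, the pole cancellation, and the identity $\wvec{U}\wvec{U}^{t}=\wvec{I}$ — is routine.
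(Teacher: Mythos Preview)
Your proof is correct and follows essentially the same route as the paper: you read off the block structure of the $c_j$ and bound them identically; you verify (i) by the same explicit cancellation yielding $p_{n_k}(z_k)=16^{k}z_k^{2n_k}$; and for (ii) you use the very same decomposition $\wvec{B}_{n_k}=16^{k}\wvec{U}+\wvec{V}$ with the co-isometry $\wvec{U}$ and Weyl's perturbation inequality. The only cosmetic difference is that you bound $\wnorm{\wvec{V}}$ via the Toeplitz symbol, whereas the paper bounds it by the cruder $\sum_j |c_j|$ over the relevant diagonals (each shift has norm $1$); both arrive at the same threshold $\wnorm{\wvec{V}}<\tfrac{2}{3}\cdot 16^{k}$ and hence the same ratio $5$.
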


\begin{proof}
Inspecting \wref{mine}, the reader will notice that
$c_0 = 1$ and, for $k = 2,3 \dots$,
\begin{equation}
\label{def_ck}
c_{2^k - 3} = 16^k \hspace{0.5cm} \wrm{and} \hspace{0.5cm}
c_j = 16^k z_k^{2^{k+1} - 4 - j} \ \ \wrm{for} \ \ 2^{k} - 2 \leq j \leq 2^{k + 1} - 4.
\end{equation}
Since $\wabs{z_k} < 1$ it follows that,
for $j$ in \wref{def_ck}, we have
$\wabs{c_j} \leq 16^k = (2^k)^4 \leq \wlr{j + 3}^4$.

We can rewrite the function $f$ in \wref{mine} as
\[
\wfc{f}{z} = \frac{\wfc{p_{n_k}}{z}}{1 - z/z_k} + \wfc{O}{z^{2 n_k + 1}}
\]
for
\[
\wfc{p_{n_k}}{z} := \wlr{1 + \sum_{j = 2}^{n_k - 1} 16^j \wlr{z^{n_j - 1} +
z_k^{2 n_k} \sum_{\ell = n_j}^{2n_j} \wlr{\frac{z}{z_j}}^\ell}} \wlr{1 - \frac{z}{z_k}}
\]
\[
+ 16^k \wlr{z^{n_k - 1} - \frac{z^{n_k}}{z_k} + \wlr{z z_k}^{n_k}},
\]
and the accuracy-through-order criterion shows that $\wfc{q_{n_k}}{z} = 1 - z / z_k$ and $\wfc{p_{n_k}}{z}$
define the $(n_k, n_k)$ Padé approximant of $f$. It follows that $\wfc{p_{n_k}}{z_k} = 16^{k} z_k^{2 n_k} \neq 0$,
and we have proved item (i) in Theorem \ref{thm_main}.

In order to verify item (ii), let us show that the matrix $\wvec{B}_n$ corresponding to $n = n_k = 2^k - 2$ has appropriate singular values.
Equation \wref{def_ck} shows that $c_{n-1} = c_{2^k -3} = c_{2n} = 16^k$, and we can write
\[
\wvec{B}_n = 16^k \wvec{U} + \wvec{V} \hspace{1cm} \wrm{with} \hspace{1cm} \wvec{V} := c_n \wvec{V}_0 + \sum_{j = 2}^{n-1} c_{n - j} \wvec{V}_j + \sum_{j = 1}^{n-1} c_{n +j} \wvec{W}_j,
\]
where $\wvec{U}$, $\wvec{V}_j$ and $\wvec{W}_j$  are $n \times (n + 1)$ matrices such that
\begin{itemize}
\item $u_{n1} = u_{i (i + 2)} = 1$ for $i = 1,\dots n-1$, and $u_{ij} = 0$ otherwise,
\item $(v_j)_{i (i + j + 1)} = 1$ for $ i = 1, \dots n - j$, and $(v_j)_{ik} = 0$ otherwise,
\item $(w_j)_{i (i - j  + 1)} = 1$ for $ i = j, \dots n$, and $(w_j)_{ik} = 0$ otherwise.
\end{itemize}
The matrix $\wvec{U}$ has singular values $\wfc{\sigma_1}{\wvec{U}} = \wfc{\sigma_2}{\wvec{U}} = \dots = \wfc{\sigma_n}{\wvec{U}} = 1$, because
$\wvec{U} \wvec{U}^t = \wvec{I}_n$,  and $\wnorm{\wvec{V}_j}_2 = \wnorm{\wvec{W}_j}_2 = 1$. It follows that
\begin{equation}
\label{def_s}
\wnorm{\wvec{V}}_2 \leq S := \wabs{c_n} + \sum_{j = 1}^{n - 2}\wabs{c_j} +  \sum_{j = n + 1}^{2n - 1} \wabs{c_j} =
\sum_{j = 1}^{n - 2}\wabs{c_j} +  \sum_{j = n}^{2n - 1} \wabs{c_j}
\end{equation}
and
\begin{equation}
\label{eq_sigma}
16^k - S \leq \wfc{\sigma_n}{\wvec{B}_n} \leq \wfc{\sigma_{1}}{\wvec{B}_n} \leq 16^k + S,
\end{equation}
due to the inequality $\wabs{\wfc{\sigma_i}{\wvec{M} + \bf{\Delta}} - \wfc{\sigma_i}{\wvec{M}}} \leq \wnorm{\bf{\Delta}}_2$,
which follows from the min/max characterization of the singular values of the $n \times m$ matrix $\wvec{M}$ with $m \geq n$:
\[
\wfc{\sigma_i}{\wvec{M}} = \min_{\wfc{\wrm{dim}}{\wcal{S}} = m - i + 1}
\wlr{ \max_{\wvec{x} \in \wcal{S}\  \wrm{with} \ \wnorm{\wvec{x}}_2 = 1} \wnorm{\wvec{M} \wvec{x}}_2 },
\]
where the minimum is taken over all subspaces $\wcal{S}$ of $\wrn{m}$ with dimension $m - i + 1$.
Recalling that $n = 2^k - 2$ and $\wabs{z_k} < 1/3$, we deduce from \wref{def_ck} that
\begin{equation}
\label{first_sum}
\sum_{j = n}^{2n - 1} \wabs{c_j} = \sum_{j=n}^{2n - 1} 16^k \wabs{z_k}^{2^{k+1} - 4 - j}
= 16^k \sum_{j=2^k - 2}^{2^{k+1} - 5}  \wabs{z_k}^{2^{k+1} - 4 - j}
\end{equation}
\[
= 16^k \sum_{m=1}^{2^k - 2} \wabs{z_k}^m < 16^k \sum_{m=1}^{\infty} \wabs{z_k}^m =
\frac{16^k \wabs{z_k}}{1 - \wabs{z_k}}
< \frac{16^k}{2}.
\]
Moreover, using \wref{def_ck} we obtain
\begin{equation}
\label{second_sum}
\sum_{j = 1}^{n - 2}\wabs{c_j} =
\sum_{m = 2}^{k - 1} \sum_{j = 2^m - 3}^{2^{m+1} - 4}\wabs{c_j} =
\sum_{m = 2}^{k - 1} 16^m \wlr{1 + \sum_{j = 2^m - 2}^{2^{m+1} - 4} z_m^{2^{m+1} - 4 - j} }
\end{equation}
\[
= \sum_{m = 2}^{k - 1} 16^m \wlr{ 1 + \sum_{l = 0}^{2^m - 2} \wabs{z_m}^{l} } <
\sum_{m = 2}^{k - 1} 16^m \wlr{ 1 + \sum_{l = 0}^{\infty} \wabs{z_m}^{l} }
\]
\[
= \sum_{m = 2}^{k - 1} 16^m \wlr{1 + \frac{1}{1 - \wabs{z_m}}} < \frac{5}{2} \sum_{m = 2}^{k-1} 16^m
< \frac{5}{2} \frac{16^k}{15} = \frac{16^k}{6}.
\]
Equations \wref{def_s}, \wref{first_sum} and \wref{second_sum} show that $S < 2 \times 16^k / 3$. Thus,
\wref{eq_sigma} yields
\[
\frac{\wfc{\sigma_1}{\wvec{B}_n}}{\wfc{\sigma_n}{\wvec{B}_n}} < \frac{1 + \frac{2}{3}}{1 - \frac{2}{3}} = 5
\]
and we are done.
$\qquad$ \end{proof}

\section{Acknowledgments and related work}
We would like to thank both referees for reviewing our work. In particular,
the first referee called our attention for the similarity between our
example and Gammel's. The first version of this note was based only in \cite{WALLIN},
and our function $f$ was described in terms of the coefficients $c_j$.
We were not aware of the explicit formula \wref{gammels}, which lead us
to write our examples as in equation \wref{mine} in the present version. This formula gives a
clearer view of the block structure of the Padé approximants in our example, and
this contribution by the first referee was much appreciated.

Finally, we would like to call the reader's attention to the
related article \cite{MATOS}, which presents the results of numerical experiments
regarding the specific problem we discuss here, and proves theoretical results
regarding several problems that we did not address. \cite{MATOS} does not
present theoretical examples like ours, but it is broader than this short
note.

\end{document}